\documentclass[12pt]{amsart}
\usepackage{geometry} % see geometry.pdf on how to lay out the page. There's lots.
\geometry{a4paper} % or letter or a5paper or ... etc
% \geometry{landscape} % rotated page geometry

% See the ``Article customise'' template for come common customisations

\usepackage{amsmath}
\usepackage{amsfonts,amssymb,amsthm,amscd,latexsym,euscript}
\usepackage{mathrsfs}

\usepackage[all]{xy}
\SelectTips{cm}{10}

\newcommand{\Sp}{\ensuremath{\textup{Sp}}}
\newcommand{\spt}{\ensuremath{\textup{spt}}}
\newcommand{\Nat}{\ensuremath{\textup{Nat}}}
\newcommand{\Sph}{\ensuremath{\textup{Sph}}}
\newcommand{\Ob}{\ensuremath{\textup{Ob}}}

\newcommand{\op}{{\ensuremath{\textup{op}}}}

%\newcommand{\mod}{\ensuremath{{\textup mod}}}
%
% Diagram
%

\newcommand{\dgrm}[1]{\ensuremath{\smash{\underset{\widetilde{\hphantom{#1}}}{#1}} \mathstrut}}

%\newcommand{\two}[2]{\ensuremath{\genfrac{}{}{0pt}{0}{#1}{#2}}}

%
% Arrows and constructions
%

%\newcommand{\surj}{\ensuremath{\twoheadrightarrow}}%     % Surjektion 
%\newcommand{\inj}{\ensuremath{\hookrightarrow}}%         % Injektion
                            % \rightarrowtail frher   ------   Georg's notation

\newcommand {\cofib} {\ensuremath{\hookrightarrow}}

\newcommand {\fibr} {\ensuremath{\twoheadrightarrow}}

\newcommand {\weq} {\ensuremath{\tilde\rightarrow}}
\newcommand {\leftweq} {\ensuremath{\tilde\leftarrow}}

 % Left derived

 % Right derived

%\newcommand{\domain}[1]{\ensuremath{\gothic d({#1})}}
%\newcommand{\range}[1]{\ensuremath{\gothic r({#1})}}
%\newcommand{\domain}[1]{\ensuremath{\mathrm{dom}({#1})}}
%\newcommand{\range}[1]{\ensuremath{\mathrm{codom}({#1})}}

\DeclareMathOperator{\End}{\textup{End}}

%\newcommand{\cal}[1]{\ensuremath{\mathcal #1}}

%
% Theorem numeration and style.
%

\newtheorem {theorem1}{Theorem}[section]
\newtheorem {theorem}[theorem1]{Theorem}
\newtheorem {corollary}[theorem1]{Corollary}
\newtheorem {proposition}[theorem1]{Proposition}
\newtheorem {lemma}[theorem1]{Lemma}

\theoremstyle{definition}
\newtheorem {definition}[theorem1]{Definition}
\newtheorem {example}[theorem1]{Example}

\theoremstyle{remark}

\newtheorem {remark}[theorem1]{Remark}
\newtheorem {recall}[theorem1]{Reminder}
\newtheorem {notation}[theorem1]{Notation}

%\theoremstyle{definition}
%\newtheorem{construction}[theorem1]{Construction}

%
% Calligraphic letters
%

\newcommand{\calC}{\ensuremath{\mathcal{C}}}

\newcommand{\calE}{\ensuremath{\mathcal{E}}}
\newcommand{\calF}{\ensuremath{\mathcal{F}}}
\newcommand{\calG}{\ensuremath{\mathcal{G}}}
\newcommand{\calH}{\ensuremath{\mathcal{H}}}

\newcommand{\calP}{\ensuremath{\mathcal{P}}}

\newcommand{\M}{\ensuremath{\mathcal{M}}}

%
% Categories Names
%

%\newcommand{\cat}[1]{\ensuremath{\EuScript #1}}
\newcommand{\cat}[1]{\ensuremath{{\mathscr #1}}}

\newcommand{\sS}{\ensuremath{\mathcal{S}}}

\newcommand{\Cat}{\ensuremath{\calC\textup{at}}}

%
% General mathematical symbols
%

\DeclareMathOperator{\Ev}{\ensuremath{\textup{Ev}}}

\newcommand{\NN}{\ensuremath{\mathbb{N}}}

\newcommand{\colim}{\ensuremath{\mathop{\textup{colim}}}}
\newcommand{\hocolim}{\ensuremath{\mathop{\textup{hocolim}}}}

\DeclareMathOperator{\Lan}{\ensuremath{\textup{Lan}}}

%
% Shortcuts
%

\newcommand{\obj}[1]{\ensuremath{\textup{obj}(#1)}}

\renewcommand{\hom}{\ensuremath{{\rm hom}}}%% redefined

%%%%%%%%%%%%%%%%%%%%%%%%%%%%%%%%%%%%%%%%%%%%%%%%%%%%%%%%%%%%%%%%%
%%%%%%%% Diagramme etc.               %%%%%%%%%%%%%%%%%%%%%%%%%%%
%%%%%%%%%%%%%%%%%%%%%%%%%%%%%%%%%%%%%%%%%%%%%%%%%%%%%%%%%%%%%%%%%

%    
%%%%%%%% numeriertes zentriertes Diagramm mit Label %%%%%%%%%%%%%%%
%%%%%%%% centertags-Packet                          %%%%%%%%%%%%%%%

%
%%%%%%%%                         Diagramm           %%%%%%%%%%%%%%%

            % Dieser Befehl setzt den Z"ahler in der enumerate-Umgebung neu. 
            % Damit kann man eine neue Umgebung aufmachen, die schon bei einer
            % gewissen Zahl anfÃÂÂ€ngt zu zÃÂÂ€hlen. Dies geht mit folgendem
            % Befehl:
            % \begin{enumerate}\setrefstep{1}\item Teil 2\end{enumerate}
            % FÃÂÂ€ngt erst bei 2 an zu zÃÂÂ€hlen.

%%%\newcommand{\Ref}[1]{\hrl\ref{#1}\hrr}        % klammert Verweise ein

\newcounter{zahl}%
    {\end{list}}%
%%%%%%%% Listenumgebung, siehe auch Latex-Begleiter Seite 60 %%%%%%

\title{Homotopical recognition of diagram categories}
\author{Boris Chorny}
\author{David White}
%\date{} % delete this line to display the current date

\begin{document}

\begin{abstract}
Building on work of Marta Bunge in the one-categorical case, we characterize when a given model category is Quillen equivalent to a presheaf category with the projective model structure. This involves introducing a notion of \emph{homotopy atoms}, generalizing the orbits of Dwyer and Kan, \cite{DK}.  Apart from the orbit model structures of Dwyer and Kan, our examples include the classification of stable model categories after Schwede and Shipley, \cite{Schwede-Shipley}, isovariant homotopy theory after Yeakel, \cite{isovariant}, and Cat-enriched homotopy theory after Gu, \cite{gu2016generalized}.
% NOT AN EXAMPLE: the Thomason model structure considered 
%The most surprising example is the classical category of spectra, which turns out to be equivalent to the presheaf category of pointed simplicial sets indexed by the desuspensions of the sphere spectrum. No Bousfield localization is needed.

As an application, we give a classification of polynomial functors (in the sense of Goodwillie calculus, \cite{Goo:calc3}) from finite pointed simplicial sets to spectra, and compare it to the previous work by Arone and Ching, \cite{Arone-Ching-crosseffects}.
\end{abstract}

\maketitle

\section{Introduction}

Sixty years ago, Marta Bunge, \cite{Bunge}, gave a criterion for when a category is equivalent to a functor category indexed by a small category. This is happens if and only if the category is equipped with a set of atoms. In this paper, we provide a homotopical version of Bunge's classification, after introducing a suitable notion of \emph{homotopy atoms}.

Diagram categories were used to lay the foundations of many important constructions in homotopy theory. For example, setting up stable homotopy theory begins with sequences of spaces, and the stable model structure on spectra is a localization of the projective model structure on diagrams \cite{BF}. The same is true equivariantly and motivically, where even the unstable homotopy theory requires diagram categories \cite{Hovey-spectra}. Diagram categories also arise in the homotopy theory of various categories of manifolds \cite{white-colimits}, and when studying morphisms in any model category \cite{white-yau5}. In monoidal settings, they arise when setting up the homotopy theory of operads \cite{white-yau2}, of algebras over colored operads \cite{white-yau1}, % I mean, M^C where C is the set of colors
of (operad-structured) ideals of ring spectra \cite{white-yau6},
of polynomial monads \cite{quasi-tame, batanin-white-eilenberg},
and in higher category theory \cite{batanin-white-transactions, oberwolfach}. More generally, any combinatorial model category is Quillen equivalent to the Bousfield localization of a category of simplicial presheaves by a result by Dugger, \cite{Dugger-presentation}.

There are exceptions, however. An important example is the homotopy theory of pro-objects in a model category established by Edwards and Hastings, \cite{EH76}, and later on by Isaksen, \cite{Isaksen-strict}. In order to view it as (an opposite of) a subcategory of pro-representable functors in a category of diagrams, one has to consider the category of small presheaves indexed by a large category, as considered by the first author, \cite{Chorny-ClassHomFun,Chorny-ClassLinFun}. These categories of small functors indexed by large categories are neither locally presentable, nor cofibrantly generated, \cite{Chorny-maps}. They fall under a more general framework of class-combinatorial model categories developed by the first author and Rosicky, \cite{Chorny-Rosicky-II}.  In this work we deal only with diagram categories indexed by small categories. We hope to extend our theory to the categories of small functors in the future.

Hence, it is natural to want to characterize when a given model category is a diagram category, up to a Quillen equivalence.

In homotopy theory, the closest analog so far to Bunge's work was the concept of an orbit introduced by Dwyer and Kan, \cite{DK}. Their work produces a sufficient (but not necessary) condition for a category to have a model structure Quillen equivalent to the projective model structure on diagrams of spaces, indexed by a small category. By modifying the notion of an orbit we are able to characterize when a given model category is Quillen equivalent to the projective model structure on a presheaf category (see Theorem~\ref{criterion}). We work in the generality of $\cat V$-enriched model categories, rather than only simplicial model categories, and illustrate the reason for this generality with a number of examples. A different set of sufficient conditions for a model category to be Quillen equivalent to the presheaf category was worked out by Anna Montaruli, \cite{montaruli}.

%% state main theorem?

As an application of Theorem~\ref{criterion} we  give a new and simple classification of polynomial functors from finite spaces to spectra as category of \emph{simplicial} presheaves over some indexing category. Our classification is not the first one, hence the need to compare it with the previous work.
%HERE

There are several  approaches to the classification of the finitary polynomial functors from pointed spaces to spectra starting with an unpublished paper of Dwyer and Rezk, subsumed later on by the fundamental work of Arone and Ching, \cite{Arone-Ching-crosseffects}. In this paper, a careful analysis of the smash powers of the identity functor allowed the authors to relate the category of the $n$-truncated $\textup{Com}$-modules with the category of $\Gamma_{\leq n}$-indexed diagrams of spectra, where $\Gamma_{\leq n}$ is the category of pointed sets of size at most $n$. This is the work we have chosen to compare our results with. 

Our classification may be viewed as a variant of the above result with the exception that the diagrams we consider take values in a very convenient model of spectra built for this classification. %In order to compare the two results we convert our diagrams of simplicial sets to the diagrams of spectra using the very specific form of the indexing category.

Another paper by the same authors performs the classification in terms of an additional structure on the symmetric sequence of derivatives of a functor, \cite{Arone-Ching-classification}, and it is further away from our approach.

Other works on this topic include \cite{Arone-Barthel-classification} and \cite{Glasman} but these works concentrate on the classification of polynomial functors from spectra to spectra in terms of the Balmer spectrum and the category of Mackey functors, respectively.

We now describe the structure of the paper. In Section \ref{sec:prelim}, we recall the notion of orbits from \cite{DK}, and establish notation for the rest of the paper. In Section \ref{sec:atoms}, we prove our main theorem -- the classification of diagram categories up to homotopy -- and provide numerous examples connecting this result to equivariant homotopy theory, isovariant homotopy theory, stable homotopy theory, and $Cat$-enriched homotopy theory. In Section \ref{sec:classification}, we classify polynomial functors, our main application of Theorem \ref{criterion}. Lastly, in Section \ref{sec:comparison}, we compare our classification with other known classifications.
%% might need to say more here.

\section*{Acknowledgments}

The authors are grateful to John Harper and to the Ohio State University for hosting us in 2024, when this paper was conceived. We also thank Emmanuel Dror Farjoun and Bill Dwyer for helpful conversations. We thank Lennart Meier, Thomas Blom, Simon Henry, Omar Camarena, and Greg Arone for catching a mistake in an earlier version of this paper, and for directing our attention to related work.
%% more??

\section{Preliminaries} \label{sec:prelim}

In this section, we introduce definitions and results that we will need to prove our main theorems, and we set down notation. We assume the reader is familiar with model categories (e.g., \cite{Hovey}), the basics of enriched model categories (e.g., \cite{Schwede-Shipley}), and the basics of combinatorial model categories (e.g., \cite{bous-loc-semi}).

Throughout the paper, $\sS$ will denote the category of simplicial sets, with the Kan-Quillen model structure. If $\cat V$ is a closed symmetric monoidal category, and $\cat M$ is a $\cat V$-model category \cite[Definition 4.2.1]{Hovey}, then for $X,Y \in \cat M$, we write $\hom(X,Y)$ for the hom object in $\cat V$. If $K\in \cat V$, we write $X\otimes K \in \cat M$ for the $V$-tensoring.
% We write $Hom(X,Y)$ for the set of morphisms.
% We write $map(X,Y)$ for simplicial mapping spaces

We next review the concept of an orbit, due to Dwyer and Kan \cite[2.1]{DK}. A set of orbits is defined to be a set $\{O_e\}_{e\in E}$ of objects of a simplicial category $\M$, that is closed under direct limits, has good homotopical behavior of transfinite compositions of pushouts of maps of the form $O_e \otimes K \to O_e \otimes L$ (where $K\to L$ is the inclusion of a subcomplex of a finite simplicial set), and $\hom(O_e,-)$ turns pushouts as above into homotopy pushout squares \cite[2.1]{DK}. A good example to keep in mind, which explains the name `orbit', is the set of objects $G/H$ where $G$ is a group and $H$ is a subgroup of $G$, so that $\hom(G/H,X)\simeq X^H$.

If a simplicial category $\M$ is equipped with a set of orbits, satisfying the axioms of \cite[2.1]{DK}, then there exists a model structure on $\M$ Quillen equivalent to the category of simplicial presheaves indexed by the full subcategory of orbits and equipped with the projective model structure \cite[Theorem 3.1]{DK}.

\begin{definition} \label{defn:orbit-model-str}
Let $\M$ be a simplicial category with a set $\{O_e\}_{e\in E}$ of orbits. The \textit{orbit model structure} on $\M$ \cite[Theorem 2.2]{DK} defines a morphism $f$ to be a weak equivalence (resp. fibration) if and only if the induced map $\hom(O_e,f)$ is a weak equivalence (resp. fibration) of simplicial sets. Cofibrations are characterized by the lifting property, and generating cofibrations are of the form $O_e \otimes i_n$ where $i_n:\partial \Delta[n] \to \Delta[n]$ is the usual inclusion.
\end{definition}

\begin{example} \label{ex:equivariant-orbits}
%One example of an orbit in this sense is a homogeneous space under a group action, hence the naming. Specifically, 
If $G$ is a topological group, then the set of spaces $\{G/H \;|\; H < G\}$ is a set of orbits. The standard model structure for $G$-spaces, modeling the equivariant homotopy theory in sense of Bredon \cite{Bredon}, is a special case of the orbit model structure, and Elmendorf's theorem \cite{elmendorf} is a special case of the Quillen equivalence \cite[Theorem 3.1]{DK}.
\end{example}

Another example of orbits is a collection of representable functors in the category of simplicial presheaves indexed by a small category \cat C. In this case the orbits define the projective model structure on the presheaf category $\sS^{\cat C^{op}}$. Generalizing the concept of homogeneous spaces to include actions of small categories,  Dror Farjoun and Zabrodsky came up with the following concept of an orbit (\cite[Definition 1.1]{DZ}): for a small category \cat D, a diagram $\dgrm T\in {\sS}$ is an orbit if $\colim_{\cat D}\dgrm T=\ast$. Dror Farjoun noticed later on that these new orbits are also orbits in the sense of Dwyer-Kan \cite{Farjoun}. The significant difference is that there is usually a proper class of orbits \cite{PhDI, Chorny-Dwyer}. The example of representable functors interpreted as orbits led to the development of the relative homotopy theory of Balmer and Matthey \cite{Balmer-Matthey} and was further extended to classes of orbits by the first author, in \cite{Chorny-relative}.

Together with the work of Gu \cite{gu2016generalized}, carrying over the concept of orbits to the categories enriched in $\Cat$, and work of Housden \cite{Housden}, using orbits for equivariant stable homotopy theory, these are all the currently known examples of orbits in the sense of Dwyer and Kan. It is evident, however, that the categories of diagrams of spaces (or spectra) appear frequently in homotopy theory and its applications and are not necessarily equipped with sets (or classes) of orbits. The axioms by Dwyer and Kan provide a sufficient condition allowing for a model structure Quillen equivalent to the category  of diagrams of spaces, but this condition is not necessary.

Having reviewed these preliminary concepts and results, we turn to our main theorem.

\section{Homotopy atoms} \label{sec:atoms}

In this section we define the notion of \textit{homotopy atoms} by softening the Dwyer and Kan orbit axioms \cite[2.1]{DK}, and we give a necessary and sufficient condition for a model category to be Quillen equivalent to a projective model structure on a category of presheaves taking values in a closed symmetric monoidal combinatorial model category. We let $\widehat{-}$ denote fibrant replacement in our ambient model category $\cat V$.

We say that a set of functors $\{F_i \,|\, i\in I\}$ \emph{jointly reflect} a property if, given a morphism $f$, if $F_if$ has the property for all $i\in I$, then so does $f$.

\begin{definition}\label{hom-atoms}
Let \cat V be a closed symmetric monoidal combinatorial model category with $\cat I = \{ A_i\cofib B_i \,|\, i\in I\}$ a set of generating cofibrations for some set $I$. Suppose that \cat M is a \cat V-model category. We say that \cat M is equipped with a set of \emph{homotopy atoms} if there exists a set of cofibrant objects $\calH\subset \cat M$ such that
\begin{enumerate}
\item the functors $\hom(T,-)$ for all $T\in \calH$ jointly reflect weak equivalences between fibrant objects;
\item the functors $\hom(T,\widehat{-})$ for all $T\in \calH$ commute, up to a weak equivalence, with $A_i\otimes -$ and $B_i\otimes - $ for all $i\in I$, with homotopy pushouts, and with sequential homotopy colimits.
\end{enumerate}
\end{definition}

\begin{remark}\label{rem}
We mostly have in mind (pointed) simplicial sets or spectra, $\cat V=\sS, \sS_\ast, \Sp$, as the base category, but the category of chain complexes or the category of small categories are also good examples. Note that in case  $\cat V=\sS$ or $\sS_\ast$, the verification of the commutation with $A_i\otimes -$ and $B_i\otimes - $ for all $i\in I$ follows by an inductive argument similar to \cite[Lemma~3.1]{Chorny-BrownRep} or, more generally \cite[Lemma~4.2]{Chorny-ClassLinFun}, provided that the homotopy pushouts are preserved by the functors $\hom(T,\widehat{-})$. If $\cat V = \Sp$ this verification is redundant by the generalization of \cite[Lemma~7.2]{Duality}, since the set of generating cofibrations in spectra has compact domains and codomains (these are the same generating cofibrations as in the projective model structure). Thus, Spanier-Whitehead duality implies that $A\wedge X \simeq \hom(DA, X)$ for every fibrant $X\in \Sp$. Substitute $X = \hom(T, \widehat -)$ to obtain
\begin{align*}
A\wedge \hom(T, \widehat -) \simeq \hom(DA,\hom(T, \widehat -)) \cong  \hom(DA\otimes T, \widehat -)\\
 \cong \hom(T,\hom(DA, \widehat -)) \simeq \hom(T,  \widehat{A\otimes -}),
\end{align*}
where the last weak equivalence is an application of Spanier-Whitehead duality again (the homotopy category of compact spectra is self dual).

For $\cat M = \Cat$ or $\textup{Ch}_R$ the full verification is required, since weighted homotopy colimits are different from ordinary homotopy colimits, see \cite{weighted}.
\end{remark}

With this definition in hand, we are ready to formulate our main result, which is a homotopical classification of diagram categories.

\begin{theorem}\label{criterion}
Let \cat M be a $\cat V\!$-model category for a combinatorial base category $\cat V\!$. Then there exists a small $\cat V\!$-category \cat C with a Quillen equivalence 

\[\xymatrix{R\colon\cat M \ar@/_/[r]^{\bot} & {\cat V^{\cat C^{\op}}:\! L} \ar@/_/[l]}\]

if and only if the category \cat M is equipped with a set of homotopy atoms, assuming that the category of $\cat V\!$-valued presheaves $\cat V^{\cat C^{\op}}$ is equipped with the projective model structure.
\end{theorem}

\begin{proof}
Suppose first that there is a Quillen equivalence of \cat M with the category of \cat V-valued presheaves. Then put $\calH=\{T_C=L(\hom(-,C) \,|\, C\in \cat C\}$. This is a set of cofibrant objects in \cat M, since the representable functors are cofibrant in the projective model structure. 

Note that the right adjoint $R\colon \cat M\to \cat V^{\cat C^{\op}}$ for every $M\in \cat M$ may be computed as $RM(C)=\hom(T_C, M)$, since, by Yoneda's lemma,
\[
\hom_{\cat M}(L(\hom(-,C)), M)=\hom_{\cat V^{\cat C^{\op}}}(\hom_{\cat C}(-,C), RM)=RM(C).
\] 
%%%%%%%%%%%%%%%%%%%%%%%%%%%%%%%%%%%%%
%Hence, the left adjoint $L-=-\otimes_{\cat C}H$, where $H\colon \cat H\to \cat M$ is the inclusion of a full subcategory generated by the homotopy orbits.
%%%%%%%%%%%%%%%%%%% ??? %%%%%%%%%%%%%%%
%
% The left adjoint may not look exactly like that, since \cat H and \cat C are probably (DK-,r-) equivalent, 
%             but not equivalent categories.
%
%%%%%%%%%%%%%%%%%%%%%%%%%%%%%%%%%%%%

The right Quillen functor $R$ reflects weak equivalences between fibrant objects in \cat M and, for every cofibrant $F\in \cat V^{\cat C^{\op}}$, the derived unit of the adjunction $F\to R\widehat{LF}$ is a weak equivalence \cite[Corollary~1.3.16(c)]{Hovey}. Therefore, the functors $\hom(T,-)$, $T\in \calH$ jointly reflect weak equivalences between fibrant objects.

In order to show that the functor $\hom(T, \widehat{-})$ commutes with the homotopy colimits for all $T\in \calH$, it suffices to show that $R\widehat{-}$ commutes with homotopy colimits. For all $M\in \cat M$  there exists a zigzag of weak equivalences $M\weq \widehat{M}\leftweq L({R\widehat M})_{\mathrm{cof}}$ by \cite[Corollary~1.3.16(b)]{Hovey}. Therefore,
\[
R{(\hocolim_{i\in I} M_i)_{\mathrm{fib}}}\simeq 
R{(\hocolim_{i\in I}L(R\widehat{M_i})_{\mathrm{cof}})_{\mathrm{fib}}}\simeq
R{(L\hocolim_{i\in I}(R\widehat{M_i})_{\mathrm{cof}})_{\mathrm{fib}}}.
\]
The latter is weakly equivalent to $\hocolim_{i\in I}((R\widehat{M_i})_{\mathrm{cof}})\simeq \hocolim_{i\in I}(R\widehat{M_i})$ by \cite[Corollary~1.3.16(c)]{Hovey} again. In this argument, we denote by $\hocolim(-)$ the weighted homotopy colimit, i.e., $A\otimes -$ is a kind of homotopy colimit for a cofibrant $A\in\cat V$ (see \cite{weighted}).

For the inverse direction, assume that $\cat M$ is equipped with a set of homotopy atoms \calH. Let $\cat C$ be a full \cat V-subcategory of \cat M on the set of objects $\calH$, and consider the adjunction $\xymatrix{R\colon\cat M \ar@/_/[r]^{\bot} & {\cat V^{\cat C^{\op}}:\! L} \ar@/_/[l]}$, where $RM(T) = \hom(T,M)$ for all $T\in \calH$ and $M\in \cat M$. Hence, the left adjoint is given by $L(-)=(-)\otimes_{\cat C}H$, where $H\colon \cat C\to \cat M$ is the inclusion of the full subcategory \cite[3.5]{Kelly}.

This adjunction is a Quillen pair, since the right adjoint $R$ readily preserves fibrations and trivial fibrations, since those are levelwise in the projective model structure. In order to show that this is a Quillen equivalence we verify the conditions of \cite[Corollary~1.3.16(c)]{Hovey}. The right adjoint reflects weak equivalences by the first property of the homotopy atoms. It suffices to check that the map $F\to R\widehat{LF}$ is a weak equivalence for all cellular $F\in \cat V^{\cat C^{\op}}$, since any cofibrant object is a retract of a cellular one. By a straightforward cellular induction, this map a weak equivalence on each stage of the cellular construction, which is a combination of cotensors with the generating cofibrations of \cat V, of homotopy pushouts, and of sequential homotopy colimits. They are all preserved strictly by $L$ as a left Quillen functor and they are also preserved by $R(\widehat -)$, up to weak equivalence, by the second property of homotopy atoms.
\end{proof}

We now give several examples illustrating the power of Theorem \ref{criterion}. The first example is to the model categories that inspired the original definition of orbits.

\begin{example}
Dwyer-Kan orbits in a simplicial category \cat M are homotopy atoms with respect to the model structure they induce on \cat M, of Definition \ref{defn:orbit-model-str}.
\end{example}

Another famous classification in homotopical algebra is the Schwede-Shipley classification of stable model categories. Diagrams play an essential role here, because modules over a ring with many objects are best encoded as diagrams. Thus, the connection to Theorem \ref{criterion} should not be entirely surprising.

\begin{example} \label{ex:stable-classification}
Let \cat M be a stable simplicial model category equipped with a set of (weak) compact generators $\calG$ in the sense of Schwede and Shipley \cite{Schwede-Shipley}. Consider the Quillen equivalence $\xymatrix{F_0\colon\cat M \ar@/_/[r]^{\top} & {\Sp^{\Sigma}{\cat M}:\! \Ev_0} \ar@/_/[l]}$ of \cat M with a model category enriched over the category $\cat V = \Sp^{\Sigma}$ of symmetric spectra \cite[Def.~7.3]{Hovey-spectra}. Since the generators are defined on the level of the homotopy category, which did not change,  the set $F_0\calG$ forms a set of homotopy generators in the category $\Sp^{\Sigma}{\cat M}$. By \cite[Theorem~3.9.3]{Schwede-Shipley}, this spectral category is Quillen equivalent to the category of spectral presheaves indexed by the endomorphism category $\calE((F_0\calG)_{fib})$, which is a full subcategory of $\Sp^{\Sigma}{\cat M}$ generated by the fibrant replacements of the generators. Hence, the category $\Sp^{\Sigma}{\cat M}$ may be equipped with a set of homotopy atoms by Theorem~\ref{criterion}. 
\end{example}

A monoidal version of \cite[Theorem~3.9.3]{Schwede-Shipley} has recently been proven by \cite{Bayindir-Chorny} (for the monogenic setting) and by \cite{chorny-white-stable-monoidal} (for the general setting). We note that \cite{chorny-white-stable-monoidal} also has a $\cat V$-version of \cite[Theorem~3.9.3]{Schwede-Shipley}, allowing for a generalization of Example \ref{ex:stable-classification} away from the simplicial context. It would be interesting to formulate a monoidal version of Theorem \ref{criterion}, in analogy with \cite[Theorem A]{chorny-white-stable-monoidal}.

Our next example shows how to use Theorem~\ref{criterion} to provide a new %monoidal 
model for spectra.
%, an important problem that was open from the 1950s until the late 1990s, when several monoidal models were found.
Even though this is not an example of a presheaf category, it is close enough and will be used in Section~\ref{sec:classification}.

In this case the conditions of Theorem~\ref{criterion} are not fully met, so we will use Bousfield localization to obtain a new model of spectra as a localization of the category of certain diagrams of spaces with respect to a smaller set of maps than provided by Dugger, \cite{Dugger-presentation}.

\begin{notation}
In the next example and further occurrences of stable model categories in this paper, we will use the desuspension notation for the derived version of the loop functor:
\[
\Sigma^{-i}(-) = \left( \Omega^i (-)_{\text{fib}}\right)_{\text{cof}}.
\]
\end{notation}

\begin{example}\label{spectra}
Let $\cat M = \Sp$, be the Bousfield-Friedlander model category of spectra enriched over $\cat V=\sS_\ast$, \cite{BF}. Assume that the underlying category for spectra is the category of presheaves over the simplicial category of spheres \Sph, whose objects are $\Ob(\Sph) = \NN$, and whose hom-objects are
\[
\hom_{\Sph}(i,j)= 
\begin{cases}
S^{j-i}, 	&  i\leq j \\
*,  		&  i>j.
\end{cases}
\]
Then every simplicial functor $X_\bullet\in \sS_\ast^{\Sph}$ is equipped with a natural map 
$$S^1=\hom_{\Sph}(i,i+1) \to \hom(X_i, X_{i+1}), \forall i\geq 0,$$
or, by adunction, with a map $\Sigma X_i = S^1\wedge X_i\to X_{i+1}$. 

Consider the set of objects $\calH = \{\Sigma^{-i}(\Sigma^\infty S^0) \,|\, i\geq 0 \}$. This set satisfies several of the properties of a set of homotopy atoms. We denote by \cat E the full subcategory of spectra on the set of objects \calH. There is a Quillen map similar to Theorem~\ref{criterion} 
\[
\xymatrix{
\sS_\ast^{\cat E^{\op}}
\ar@/^10pt/[r]^L
&
\Sp,
\ar@/^10pt/[l]^R
}
\]
where $R(X_\bullet)(i) = \hom(\Sigma^{-i}(\Sigma^\infty S^0), X_\bullet)$ for $i\geq 0$.

Let us show that the elements of $\calH$ jointly reflect weak equivalences of $\Omega$-spectra (the fibrant objects). Notice first that if $X_\bullet$ is an $\Omega$-spectrum, then $\Sigma^i X_\bullet \simeq (X_i,X_{i+1},\ldots)$, which is readily verified by application of $\Omega^i$ on both sides. 

Let $f_\bullet\colon X_\bullet \to Y_\bullet$ be a map of $\Omega$-spectra. If the maps 
$$\hom(\Sigma^{-i}(\Sigma^\infty S^0), f_\bullet)\simeq \hom(\Sigma^\infty S^0, \widehat{\Sigma^{i}(f_\bullet)}) \simeq  f_i$$ 
are weak equivalences for all $i\in \mathbb{N}$, then $f_\bullet$ is a projective weak equivalence of $\Omega$-spectra, hence a stable weak equivalence. Alternatively, notice that there is a weak equivalence $\Sigma^{-i}\Sigma^\infty S^0 \weq (\ast, \ldots, \ast, S^0, S^1,\ldots) = \hom_{\Sph}(i, - )$ to the representable functor. In other words there is a Dwyer-Kan equivalence between $\cat E^{op}$ and $\Sph$.

Moreover, homotopy pushouts of spectra are homotopy pullbacks. Thus, the functors $\hom(\Sigma^\infty S^i, \widehat -)$ take homotopy pushouts to homotopy pullbacks. There is no Quillen equivalence with the projective model structure, but we will show that it is possible to find a set $\calF$ of maps such that the (left Bousfield) localization with respect to $\calF$ turns the homotopy pullbacks into homotopy pushouts. Consider the class of homotopy pullback spans $\calP = \{A\fibr B \twoheadleftarrow C\,|\, A,B,C\in(\sS_\ast^{\cat E^{\op}})_{\text{fib}}\}$. Since $\calP$ is the collection of fibrant objects in the injective (or Reedy) model structure on the category $\sS_\ast^{\cdot\to \cdot\leftarrow \cdot}$, then the full subcategory on $\calP$ is $\lambda$-accessible as a small injectivity class for some cardinal $\lambda$. Let $\calP_\lambda\subset \calP$ be the subset of $\lambda$-presentable objects. For every $(A\fibr B \twoheadleftarrow C)\in \calP_\lambda$ put $D=A\times_B C$ and factor each of the pullback morphisms
$D\to A$ and $D\to C$ into a cofibration followed by a trivial fibration. The intermediate terms of these factorization we denote by $A'$ and $C'$, respectively. Finally, take $P = A'\coprod_D C'$ and consider the set $\calF = \{P\to B \,|\,(A\fibr B \twoheadleftarrow C)\in \calP_\lambda\}$. Then the left Bousfield localization with respect to $\calF$ ensures that the homotopy pullbacks become homotopy pushouts in $\sS^{\cat E^{\op}}$.

The Quillen adjunction $L\dashv R$ remains a Quillen adjunction after the localization, since the left Quillen functor $L$ preserves the cofibrations (that do not change) and the new trivial cofibrations. This is because $L$ preserves  homotopy pushouts and homotopy pullbacks in the projective model structures (before the localization on both sides) as a Quillen equivalence, implying that the maps in $\calF$ are sent by $L$ into the stable weak equivalences  in \Sp.

After this localization, the Quillen adjunction becomes a Quillen equivalence by the same argument as in the ``sufficient'' direction in the proof of Theorem~\ref{criterion}.
 \end{example}

Our next example is related to a new use of orbits, to isovariant homotopy theory, which we will describe.

\begin{example} \label{ex:isovar}
Yeakel's isovariant homotopy theory \cite{isovariant} and her isovariant Elmendorf's theorem can be viewed as a special case of Theorem \ref{criterion}, in analogy with Example \ref{ex:equivariant-orbits}. Let $G$ be a finite group. Let $\cat V$ be $\sS$, the category of simplicial sets. Let $\cat M$ be Yeakel's {\tt isvt-Top}, the category of compactly generated $G$-spaces with isovariant maps (and an added formal terminal object), i.e., equivariant maps $f:X\to Y$ with an equality of stabilizers $G_x = G_{f(x)}$ for all $x\in X$. Let $O$ be Yeakel's link orbit category $\mathcal{L}_G$ \cite[Definition 2.1]{isovariant}. This category contains all orbits $G/H$ but not all maps between them, because the goal is isovariant rather than equivariant homotopy theory. Yeakel's model structure on $\cat M$ \cite[Theorem 3.2]{isovariant} is a special case of the orbit model structure of Definition \ref{defn:orbit-model-str}, and the Quillen equivalence of her isovariant Elmendorf's theorem \cite[Theorem 4.1]{isovariant} is a special case of Theorem \ref{criterion}. 
% Note to self: Oquendo's paper is NOT a special case. It has nothing to do with orbits.
\end{example}

\begin{remark}
There are many interesting questions that can be formulated for isovariant homotopy theory, with Example \ref{ex:isovar} in place. For instance, one could create spectra on isovariant spaces, following the model of \cite{hovey-white}, and obtain a stable version of the isovariant Elmendorf's theorem as a special case of Theorem \ref{criterion}. Or, one could work out a global homotopy theory, e.g., following \cite{global}. It is also possible to investigate the monoidal properties of the category of isovariant spaces, and work out the homotopy theory of operads and algebras in the isovariant context (which should also have versions of Theorem \ref{criterion}), following \cite{white-commutative, gutierrez-white, white-localization}. Another option would be to work out spectral Mackey functors in the isovariant context.
\end{remark}

Our next example is for model categories enriched in the category of small categories (or acyclic categories, or posets), and illustrates the value of working with $\cat V$-model categories in Theorem \ref{criterion} rather than only with simplicial model categories as we did in \cite{Chorny-White}.

\begin{example}
In 2016, Gu studied orbit model structures in the case where $\M$ is the category $Cat$ of small categories, or $Ac$ of acyclic categories, or $Pos$ of posets. Gu's main result \cite[Theorem 1.1]{gu2016generalized} proves the existence of the orbit model structure of Definition \ref{defn:orbit-model-str} on $Cat^I$, where $I$ is a small category and $O$ is a set of orbits (or, more generally, a locally small class), and further proves that the Thomason Quillen equivalence $\sS \leftrightarrows Cat$ induces a Quillen equivalence $\sS^I \leftrightarrows Cat^I$ with the orbit model structures. Since $Cat$ is a simplicial category (in more ways than one), the existence of Gu's model structure follows from \cite{Chorny-Dwyer}, since one can replace the internal hom of $Cat$ by its nerve, in Gu's proof. Furthermore, because \cite{gu2016generalized} was never published, we mention that the Quillen equivalence of \cite[Theorem 1.1]{gu2016generalized} follows from Theorem \ref{criterion} (to reduce the question to a Quillen equivalence of presheaf categories), together with \cite[Proposition 1.3]{Chorny-White} (in place of \cite[Lemma 4.2]{gu2016generalized}) and the argument of \cite[Proposition 1.4]{Chorny-White} (in place of \cite[Lemma 4.3]{gu2016generalized}) to compare the model structures on presheaf categories. The same holds for $Ac, Pos$ \cite[Theorem 6.1]{gu2016generalized}, and for the $G$-projective model structure on any of the three choices for $\M$ for a discrete group $G$ (\cite[Proposition 7.1]{gu2016generalized} for the model structure and \cite[Corollary 7.3]{gu2016generalized} for the Quillen equivalence), since these are again orbit model categories.
% Can't use our hthf paper because we needed two simplicial combinatorial model categories and a nice simplicial functor between them, but Cat with Thomason is not a simplicial model category.
\end{example}

\begin{remark}
While we are on the topic of Gu's unpublished paper, we wish to point out that \cite[Remark 5.11]{gu2016generalized} is wrong, because the orbit model structure on $Cat^I$ will be a simplicial model structure (i.e., satisfy axiom SM7, by \cite[Theorem 2.2]{DK}), but that's not true for the Thomason model structure with Gu's structure. The issue is that the $(sd, Ex)$ enrichment (where $sd$ is for `subdivision') is not a simplicially enriched adjunction, since $sd$ does not preserve simplicially enriched colimits, since finite products in $\sS$ are tensors. To see that the SM7 axiom fails for the Thomason model structure, let $A$ be the one-point category, note that it's Thomason cofibrant, and note that the internal hom satisfies $Fun(A,B) = B$ for any $B$. Now, if $B$ were Thomason fibrant, and if the SM7 axiom held, it would imply that the simplicial mapping space $hom(A,B) = NFun(A,B) = NB$ is a Kan complex. But that only happens if $B$ is a groupoid, and not every Thomason fibrant $B$ must be a groupoid, e.g., any category with pushouts is Thomason fibrant. The same issue arises if one uses the simplicial mapping space $hom(A,B) = Ex^2 N Fun(A,B)$. The error is in the last line of \cite[Remark 5.11]{gu2016generalized}, where Gu states that the orbit model structure on $Cat^I$, where $I$ is the one-point category and $O = \{\ast\}$, coincides with the Thomason model structure. In fact, it coincides with the discrete model structure which, like the folk model structure, does satisfy the SM7 axiom. No choice of orbits can yield the Thomason model structure as an orbit model category.
\end{remark}

\section{A classification of (finitary) polynomial functors} \label{sec:classification}

In this section we apply Theorem~\ref{criterion} to another stable model category, the category of simplicial functors (enriched over the category of pointed simplicial sets, $\cat V = \sS_*$) from finite pointed simplicial sets to the category of symmetric spectra, $\Sp^{\sS_*^{\textup{fin}}}$. The goal is to classify the polynomial functors in the sense of Goodwillie, \cite{Goo:calc1}, as diagrams of spectra.

\begin{remark}\label{reduced}
We would like to stress that our functors are enriched over the category of \emph{pointed} simplicial sets $\sS_*$, hence, it follows that all functors $F\in \Sp^{\sS_*^{\textup{fin}}}$ are reduced, i.e., $F(\ast) = 0$. This follows from the representation of $F$ as a weighted colimit of representable functors: 
\[
F(X) = \int^{Y\in \sS_*^{\textup{fin}}} R^X(Y)\wedge F(X),\text{ where } R^X(Y) = \hom(X,Y).
\] 
\end{remark}

There are other classifications of polynomial functors from pointed spaces to spectra. Dwyer and Rezk  showed that polynomial functors are equivalent to the functors indexed by the category of finite sets and surjections (unpublished); a different set of classification results is due to Arone and Ching \cite{Arone-Ching-classification, Arone-Ching-crosseffects}. They show that the homotopy category of polynomial functors is equivalent to the category of symmetric sequences of spectra equipped with an additional structure of a divided power right module over the operad formed by the derivatives of the identity on based spaces, as in \cite{Arone-Ching-classification}, or, alternatively, they consider the category of coalgebras in symmetric sequences of spectra over the comonad $\mathrm{C_{KE_\bullet}}$, where $\mathrm{KE_n}$ is the Koszul dual of the little $n$-disc operad and $\mathrm{KE_\bullet}$ is an inverse sequence of operads, or a pro-operad, as in \cite{Arone-Ching-crosseffects}. For $\cat V$-enriched contexts, the homotopy theory of algebras over operads is described in \cite{white-yau1, white-yau3}, and for coalgebras over cooperads, with connections to Koszul duality, in \cite{white-yau7}.

% Note to self: the objects in H are cofibrant, since they are shifts of cofibrant objects (since Sigma is a left Quillen functor)
\begin{lemma}\label{derivative}
Let
$\calH = \left\{\left. \Sigma^{-p}\left(\Sigma^\infty (\bigwedge_{i=1}^k R^{S^0})_{\textup{cof}}\right) \,\right |\, 1\leq k\leq n,\, p\geq 0 \right\} \subset {\Sp^{\sS_*^{\textup{fin}}}}$ be a set of objects, and  let $f\colon F\to G$ be a map of fibrant functors in ${\Sp^{\sS_*^{\textup{fin}}}}$ equipped with the $n$-excisive model structure, \cite[Theorem 4.6]{BCR}. If the induced map $$\hom(H, f)\colon \hom(H, F)\to \hom(H, G)$$ is a weak equivalence of (pointed) simplicial sets for all $H\in \calH$, then the map $f$ is a projective weak equivalence. In other words, the objects of the set $\calH$ jointly reflect weak equivalences of fibrant objects.
\end{lemma}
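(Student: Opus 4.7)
The plan is to reduce the statement to a derivative-level computation, and then to identify $\hom(H,F)$ for each $H \in \calH$ with an invariant that detects the derivatives. Fibrant objects in the $n$-excisive model structure of \cite{BCR} are $n$-excisive functors valued in $\Omega$-spectra; since this structure is a Bousfield localization of the projective model structure, a map between two fibrant objects is a weak equivalence iff it is a pointwise stable equivalence. By Goodwillie calculus, because both $F$ and $G$ are reduced $n$-excisive, this is in turn equivalent to requiring each derivative $\partial_k(f)$ to be a stable equivalence of $\Sigma_k$-spectra for $1\leq k\leq n$ (in the projective equivariant model structure used in the classification of homogeneous functors, where equivariant equivalences coincide with underlying equivalences).

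The core computation is the identification $\hom(\Sigma^{-p}\Sigma^\infty(R^{S^0})^{\wedge k}_{\textup{cof}}, F) \simeq \Omega^{\infty+p}\partial_k F$. By the $(\Sigma^{-p},\Sigma^p)$ adjunction and fibrancy of $F$, this mapping space is $\Omega^p\hom(\Sigma^\infty(R^{S^0})^{\wedge k}_{\textup{cof}},F)$. The functor $(R^{S^0})^{\wedge k}\colon Y\mapsto Y^{\wedge k}$ is $k$-homogeneous, and so is its suspension spectrum, so every natural transformation into an $n$-excisive $F$ (with $k\leq n$) factors up to weak equivalence through the $k$-th Taylor layer $D_kF$ (because maps from a $k$-homogeneous functor into the layers $D_jF$ for $j\neq k$ are nullhomotopic). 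Under Goodwillie's classification of $k$-homogeneous functors to $\Sp$ by $\Sigma_k$-spectra, $\Sigma^\infty(R^{S^0})^{\wedge k}$ corresponds to the free $\Sigma_k$-spectrum $\Sigma^\infty(\Sigma_k)_+$, since its $k$-th cross effect at $(S^0,\dots,S^0)$ is the regular representation. Thus $\hom(\Sigma^\infty(R^{S^0})^{\wedge k},F)\simeq \hom_{\Sigma_k\textup{-}\Sp}(\Sigma^\infty(\Sigma_k)_+,\partial_kF)\simeq \Omega^\infty\partial_kF$ by the free-forgetful adjunction.

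Combining: if $\hom(H,f)$ is a weak equivalence for every $H\in\calH$, then for each $1\leq k\leq n$ and each $p\geq 0$ the induced map $\Omega^{\infty+p}\partial_kF\to \Omega^{\infty+p}\partial_kG$ is a weak equivalence of simplicial sets. Varying $p$ exhausts the levels of the $\Omega$-spectrum underlying $\partial_kF$, so $\partial_k(f)$ is a stable equivalence of $\Sigma_k$-spectra, whence $f$ is a weak equivalence by the first paragraph. The main obstacle I expect is making the core identification $\hom(\Sigma^\infty(R^{S^0})^{\wedge k},F)\simeq \Omega^\infty\partial_kF$ precise in the $\sS_*$-enriched setting: one must rigorously verify both the factorization through the $k$-th layer in the enriched projective context (handling the cofibrant replacement of $(R^{S^0})^{\wedge k}$ so that the derived mapping spaces remain unchanged), and that the classifying $\Sigma_k$-spectrum of $Y\mapsto \Sigma^\infty Y^{\wedge k}$ is the free one via the cross-effect computation sketched above.
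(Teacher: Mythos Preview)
Your proposal is correct and follows essentially the same strategy as the paper: identify the mapping spaces $\hom(H_{k,p},F)$ with derivative/cross-effect data, and then use the Goodwillie tower to conclude. The two proofs differ mainly in packaging. The paper invokes \cite[Lemma~8.2(ii)]{BCR} to obtain directly
\[
\hom\bigl(\Sigma^{-p}\Sigma^\infty(\textstyle\bigwedge_{i=1}^k R^{S^0})_{\textup{cof}},F\bigr)\ \simeq\ cr_k(\Omega^\infty\widehat{\Sigma^p F})(S^0,\ldots,S^0),
\]
so that the hypothesis yields levelwise equivalences of the spectra $cr_kF(S^0,\ldots,S^0)$, and then runs an explicit induction up the tower using \cite[Propositions~3.4 and 5.8]{Goo:calc3}. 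You instead front-load the tower argument (asserting that $f$ is an equivalence iff each $\partial_k f$ is), and then recover the identification $\hom(\Sigma^\infty(R^{S^0})^{\wedge k},F)\simeq \Omega^\infty\partial_k F$ by factoring through $D_kF$ and invoking the classification of homogeneous functors. Both routes pass through the same facts; the paper's is shorter because the cross-effect identity is already on record in \cite{BCR}, whereas your factorization-through-$D_kF$ argument reproves it. The concern you flag at the end---making the identification rigorous in the enriched projective setting---is precisely what the citation of \cite[Lemma~8.2(ii)]{BCR} handles cleanly, so that is where you should look to close the gap you anticipated.
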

\begin{proof}
Put $H_{k,p}= \Sigma^{-p}\left(\Sigma^\infty (\bigwedge_{i=1}^k R^{S^0})_{\textup{cof}}\right)$.
Recall from \cite[Lemma~8.2(ii)]{BCR} that for any projectively fibrant $F\in\Sp^{\sS_*^{\textup{fin}}}$, the following holds:
\begin{align*}
\hom_{\Sp^{\sS_*^{\textup{fin}}}}\left( H_{k,p},F\right) &= \hom_{\Sp^{\sS_*^{\textup{fin}}}}\left( \Sigma^{-p}\Sigma^\infty(\bigwedge_{i=1}^k R^{S^0})_{\textup{cof}}, F\right)\\ 
&\simeq \hom_{\sS_*^{\sS_*^{\textup{fin}}}}\left( (\bigwedge_{i=1}^k R^{S^0})_{\textup{cof}}, \Omega^\infty  \widehat{\Sigma^p F}\right) \\ 
&= cr_k (\Omega^\infty \widehat{\Sigma^p F})(S^0,\ldots S^0) \\ 
&\simeq \Omega^\infty \left( \Sigma^p(cr_k F(S^0,\ldots, S^0))\right)_{\textup{fib}}.
\end{align*}

Similarly to Example~\ref{spectra}, $\Omega^\infty \left( \Sigma^p(cr_k F(S^0,\ldots, S^0))\right)_{\textup{fib}}$ is equivalent to the $p$-th layer of the spectrum $cr_k F(S^0,\ldots, S^0)_{\textup{fib}}$. Let $f\colon F\to G$ be a natural transformation of projectively fibrant functors from finite spaces to spectra such that $\hom(H_{k,p}, f)$ is a weak equivalence for all $1\leq k\leq n$, $p\geq 0$.  Fix $k$ and $q$ and let $p$ run from $0$ to $\infty$. Then we obtain a weak equivalence of spectra 
\begin{equation}\label{cr-k-f}
cr_k f(S^0,\ldots S^0)\colon cr_k F(S^0,\ldots S^0)\to cr_k G(S^0,\ldots S^0)
\end{equation}
for all $1\leq k\leq n$, as these spectra are levelwise weakly equivalent after a fibrant replacement. 

Since the model category of polynomial functors $\Sp^{\sS_*^{\text{fin}}}$ is stable (the suspension commutes with polynomial approximation), the fibre sequence $D_kF\to P_kF\to P_{k-1}F$ is part of the exact triangle $D_kF\to P_kF\to P_{k-1}F\to \Sigma^{-1}D_kF$. We will use this to prove that $f: F\to G$ is a weak equivalence. Put $\Sigma^{-1}D_kF = R_kF$, cf \cite[Lemma~2.2]{Goo:calc3}, and consider the fibre sequence $P_kF\to P_{k-1}F\to R_kF$, where $R_kF$ is a $k$-homogeneous functor. The same construction applies to $G$, and $f\colon F\to G$ induces a morphism of exact triangles. Assume for the sake of induction that $P_{k-1}f\colon P_{k-1}F\to P_{k-1}G$ is a weak equivalence. The base case is satisfied since both functors $F$ and $G$ are reduced. 

Notice that the weak equivalence (\ref{cr-k-f}) implies, in particular, that 
\[
cr_k R_kf\colon cr_k R_kF(S^0,\ldots, S^0)\to cr_k R_kG(S^0,\ldots,S^0)
\]
is a weak equivalence, hence, by \cite[Proposition~5.8]{Goo:calc3}, the induced map of the multilinear functors
\[
cr_k R_kf\colon cr_k R_kF(X_1,\ldots, X_k)\to cr_k R_kG(X_1,\ldots,X_k)
\]
is a weak equivalence for all $X_1\ldots, X_k\in \sS_*^{\text{fin}}$. Therefore, \cite[Proposition~3.4]{Goo:calc3} implies that the induced map of the $k$-homogeneous functors $R_kf\colon R_kF\to R_kG$ is a weak equivalence. Hence, the induced map of the homotopy fibers is a weak equivalence. 

We conclude by induction that, for any $n$, the map $f\colon F\to G$ of $n$-excisive functors is a weak equivalence.

\end{proof}

\begin{proposition}\label{Kelly-split}
The full subcategory \cat C of the category of simplicial functors $\Sp^{\sS_\ast^{\text{\emph{fin}}}}$ on the set $\calH$ of objects defined in Lemma~\ref{derivative}, may be decomposed, up to a Dwyer-Kan equivalence, into a Kelly product (\cite[6.5]{Kelly}) of two categories: the category \cat E from Example~\ref{spectra} and the full subcategory $\cat F\subset \Sp^{\sS_\ast^{\text{\emph{fin}}}}$ on the set of objects $\left\{\left.  \Sigma^\infty (\bigwedge_{i=1}^k R^{S^0})_{\textup{cof}} \right|\, 1\leq k\leq n \right\}$.

\end{proposition}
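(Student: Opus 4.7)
The plan is to realize the Dwyer-Kan equivalence via an explicit $\sS_{*}$-enriched functor $\Phi\colon\cat E\otimes\cat F\to\cat C$ induced by the canonical factorization
\begin{equation*}
\Sigma^{-p}\Sigma^{\infty}\bigl(\textstyle\bigwedge_{i=1}^{k} R^{S^{0}}\bigr)_{\textup{cof}}\ =\ \bigl(\Sigma^{-p}\Sigma^{\infty}S^{0}\bigr)\wedge\bigl(\Sigma^{\infty}\bigl(\textstyle\bigwedge_{i=1}^{k} R^{S^{0}}\bigr)_{\textup{cof}}\bigr)
\end{equation*}
of every object of $\calH$, with the first factor in $\cat E$ and the second in $\cat F$; this uses the external smash action of $\Sp$ on $\cat M=\Sp^{\sS_{*}^{\textup{fin}}}$, applied levelwise on $\sS_{*}^{\textup{fin}}$. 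On morphisms, $\Phi$ is the smash pairing $\alpha\otimes\beta\mapsto\alpha\wedge\beta$. Essential surjectivity of $\Phi$ is immediate from the construction.

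The content of the proposition is full faithfulness up to weak equivalence. Abbreviating $\mathbb{S}^{-p}:=\Sigma^{-p}\Sigma^{\infty}S^{0}$ and $F_{k}:=\Sigma^{\infty}\bigl(\bigwedge_{i=1}^{k} R^{S^{0}}\bigr)_{\textup{cof}}$, I must verify that the natural comparison
\begin{equation*}
\hom_{\cat E}(\mathbb{S}^{-p},\mathbb{S}^{-q})\wedge\hom_{\cat F}(F_{k},F_{\ell})\ \xrightarrow{\simeq}\ \hom_{\cat C}(\mathbb{S}^{-p}\wedge F_{k},\,\mathbb{S}^{-q}\wedge F_{\ell})
\end{equation*}
is a weak equivalence. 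I would apply Spanier-Whitehead duality for $\mathbb{S}^{-p}$ (as in Remark~\ref{rem} and via the positive-dimension sphere realization of Example~\ref{spectra}) to obtain $\hom(\mathbb{S}^{-p}\wedge F_{k},\,Z)\simeq\hom(F_{k},\,\mathbb{S}^{p}\wedge Z)$, and then use the stability of $\cat M$ to commute the shift past $\hom(F_{k},-)$, yielding
\begin{equation*}
\hom_{\cat C}(\mathbb{S}^{-p}\wedge F_{k},\,\mathbb{S}^{-q}\wedge F_{\ell})\ \simeq\ \Sigma^{p-q}\hom_{\cat F}(F_{k},F_{\ell}),
\end{equation*}
which matches the analogous reduction of the Kelly-product side under $\hom_{\cat E}(\mathbb{S}^{-p},\mathbb{S}^{-q})\simeq\Sigma^{p-q}\hom_{\cat E}(\mathbb{S},\mathbb{S})$.

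The hardest step will be the careful bookkeeping of the enrichment: the decomposition lives most naturally in the spectral enrichment induced by the monoidal structure on $\cat E$ described at the end of Example~\ref{spectra}, and one must descend to the $\sS_{*}$-enrichment of $\cat M$ by applying $\Omega^{\infty}$ levelwise so that the unit term $\hom_{\cat E}(\mathbb{S},\mathbb{S})$ acts correctly on $\hom_{\cat F}(F_{k},F_{\ell})$. The compactness of $\mathbb{S}^{-p}$ and of the finite functors $F_{k}$ (the same compactness exploited in the proof of Theorem~\ref{thm:classification-polynomial} to commute mapping spaces past homotopy colimits and cross-effects) should allow $\Omega^{\infty}$ to commute with the relevant homotopy colimits and preserve the splitting of mapping spaces. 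Once this is verified, combining the splitting with essential surjectivity closes the argument and exhibits $\Phi$ as the required Dwyer-Kan equivalence.
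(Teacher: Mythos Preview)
Your overall architecture is close to the paper's: both arguments identify the bijection on objects and then compare hom-spaces. The paper, however, runs the functor in the other direction, $T\colon\cat C\to\cat E\wedge\cat F$, and computes $\hom_{\cat C}(C_{p_1,k_1},C_{p_2,k_2})$ \emph{directly} in the $\sS_*$-enrichment using the cross-effect identification of \cite[Lemma~8.2(ii)]{BCR}, rather than passing through a spectral enrichment and descending by $\Omega^{\infty}$.

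The gap in your proposal is precisely the step you flag as hardest. Compactness of $\mathbb{S}^{-p}$ and $F_k$ does \emph{not} by itself make $\Omega^{\infty}$ preserve the smash decomposition: $\Omega^{\infty}(A\wedge B)$ is not in general $\Omega^{\infty}A\wedge\Omega^{\infty}B$, and the formula $\Sigma^{p-q}\hom_{\cat F}(F_k,F_\ell)$ has no meaning in $\sS_*$ when $p<q$. The paper handles this by an explicit case split. For $p_1<p_2$ it argues that $\hom_{\cat C}(C_{p_1,k_1},C_{p_2,k_2})$ is the $0$-th level of a desuspension of a \emph{connective} (suspension) spectrum, hence contractible, matching $\hom_{\cat E}(E_{p_1},E_{p_2})\simeq *$. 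For $p_1\ge p_2$ it uses a specific input you do not invoke: for spectra-valued functors the cross-effect coincides with the co-cross-effect (a finite homotopy \emph{colimit}) \cite[Definition~1.5]{Arone-Ching-crosseffects}, and this is what allows the $(p_1-p_2)$-fold suspension, modeled as $S^{p_1-p_2}\wedge(-)$, to be pulled outside the cross-effect and hence outside the simplicial hom. Your Spanier--Whitehead/stability reduction reproduces this cleanly at the level of mapping \emph{spectra}, but to land in $\sS_*$ you would still need the connectivity argument for $p<q$ and the co-cross-effect identification (or an equivalent statement) for $p\ge q$; invoking compactness alone will not close the argument.
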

\begin{recall}
Let \cat V be a closed symmetric monoidal category. The Kelly product of two $\cat V$-categories \cat A and \cat B is a \cat V-category $\cat A\otimes \cat B$ with $\obj{\cat A\otimes \cat B}=\obj{\cat A}\times \obj{\cat B}$ and $\hom_{\cat A\otimes \cat B}((A, B), (A', B')) = \hom_{\cat A}(A, A')\otimes \hom_{\cat B}(B, B')$, \cite[Section~1.4]{Kelly}. Moreover, if  \cat A, \cat B,  \cat C are \cat V-categories, then the exponential rule is satisfied, \cite[Section~6.5]{Kelly}.
\[
\cat C^{\cat A\otimes \cat B}\cong \left(\cat C^{\cat A}\right)^{\cat B}.
\]
\end{recall}
\begin{proof}[Proof of \ref{Kelly-split}]
Let us put
\[
C_{p,k} = \Sigma^{-p} \left( \Sigma^\infty (\bigwedge_{i=1}^{k} R^{S^{0}})_{\textup{cof}}\right)\in \cat C, E_p = \Sigma^{-p} (\Sigma^\infty S^0), \text{ and } F_{k} =  \Sigma^\infty (\bigwedge_{i=1}^k R^{S^0})_{\textup{cof}} \in \cat F,
\]
for $1\leq k\leq n$ and $p \geq 0$.

Now we are going to establish a Dwyer-Kan equivalence of simplicial categories $T\colon \cat E \wedge  \cat F \to  \cat C$, assigning $T(E_p,  F_{k}) = C_{p,k}$ and for each pair of objects 
\begin{align*}
T_{(E_{p_1},  F_{k_1}), (E_{p_2},  F_{k_2})}\colon \hom_{\cat E\wedge \cat F}\left(
(E_{p_1},  F_{k_1}),
(E_{p_2},  F_{k_2})
\right)
 \longrightarrow  
\hom_{\cat C}(
C_{p_1,k_1}, C_{p_2,k_2})
\end{align*}
is assigned to be a natural weak equivalence of pointed simplicial sets, since
\begin{align*}
\hom_{\cat E\wedge \cat F}\left(
(E_{p_1},  F_{k_1}),
(E_{p_2},  F_{k_2})
\right) = \\
 \hom_{Sp}(\Sigma^{-p_1} (\Sigma^\infty S^0),\Sigma^{-p_2} (\Sigma^\infty S^0)) \wedge
 \hom_{\Sp^{\sS_*^{\text{fin}}}}(F_{k_1}, F_{k_2})
 \\
 = \begin{cases}
  S^{p_1-p_2}\wedge
 \hom_{\Sp^{\sS_*^{\text{fin}}}}(F_{k_1}, F_{k_2}), &  \text{if } p_1 \geq p_2; \\
  \ast,  & \text{if } p_1<p_2
\end{cases}
\end{align*}
and $\hom_{\cat C}(C_{p_1,k_1}, C_{p_2,k_2}) = \hom_{\cat C}((\bigwedge_{i=1}^{k_1} R^{S^{0}})_{\textup{cof}},\Omega^{\infty}\Sigma^{p_1-p_2}(\Sigma^{\infty}(\bigwedge_{i=1}^{k_2} R^{S^{0}})_{\textup{cof}}))$. By \cite[Lemma~8.2(ii)]{BCR}, this is $cr_{k_1}(\Omega^{\infty}\Sigma^{p_1-p_2}(\Sigma^{\infty}(\bigwedge_{i=1}^{k_2} R^{S^{0}})_{\textup{cof}}))(S^{0},\ldots, S^{0})$. In case $p_1< p_2$, this cross-effect is contractible, since any suspension spectrum is connective and its desuspensions can only produce contractible entries at the $0$-th level, therefore the cross-effect of a contractible diagram is contractible. In case $p_1\geq p_2$, we notice that $(p_1-p_2)$-fold suspension in spectra may be modeled as a smash product with a simplicial sphere, which may be viewed as a homotopy colimit, i.e., it commutes with cross-effects for functors taking values in spectra, since it may be equivalently expressed as a co-cross-effect (a finite sequence of homotopy colimits), \cite[Definition~1.5]{Arone-Ching-crosseffects}. Therefore, if $p_1\geq p_2$, then 
\begin{align*}\hom_{\cat C}(C_{p_1,k_1}, C_{p_2,k_2}) \simeq \Omega^{\infty}(S^{p_1-p_2}\wedge cr_{k_1}(\Sigma^{\infty}(\bigwedge_{i=1}^{k_2} R^{S^{0}})_{\textup{cof}})(S^{0},\ldots, S^{0})) \\
= S^{p_1-p_2}\wedge\Omega^{\infty}cr_{k_1}(\Sigma^{\infty}(\bigwedge_{i=1}^{k_2} R^{S^{0}})_{\textup{cof}})(S^{0},\ldots, S^{0})\\
 = S^{p_1-p_2}\wedge \hom_{\Sp^{\sS_*^{\text{fin}}}}(F_{k_1}, F_{k_2}).
\end{align*}

In other words, there is a natural weak equivalence of simplicial sets 
\[
T_{C_{p_1,k_1},C_{p_2,k_2}}\colon \hom_{\cat E\wedge \cat F}\left(
(E_{p_1},  F_{k_1}),
(E_{p_2},  F_{k_2})
\right)
 \weq 
 \hom_{\cat C}(
C_{p_1,k_1}, C_{p_2,k_2})
\]
for every pair of objects of \cat C, or $T$ is a Dwyer-Kan equivalence.
\end{proof}

\begin{theorem}\label{thm:classification-polynomial}
The category of functors $\Sp^{\sS_*^{\text{\emph{fin}}}}$ equipped with the $n$-excisive model structure is Quillen equivalent to the projective model structure on the category $\Sp^{\cat F^{\op}}$.
\end{theorem}

\begin{proof}
We will construct a zigzag of Quillen equivalences. Consider the subcategory $\cat C$ of $\Sp^{\sS_\ast^{\text{fin}}}$ on the set of objects $\calH$ defined in Lemma~\ref{derivative}.  By Proposition~\ref{Kelly-split}, there is a Dwyer-Kan equivalence of categories $T\colon \cat E\wedge \cat F \to \cat C$ or of the opposite categories $T^{\op}\colon \cat E^{\op}\wedge \cat F^{\op} \to \cat C^{\op}$. This defines a Quillen equivalence $\Lan_{T^{op}} \dashv (T^{op})^\ast$ of the presheaf categories with the projective model structure by \cite[Theorem~2.1]{Dwyer-Kan-Equiv}. Similarly to Theorem~\ref{criterion}, we define a Quillen adjunction $L\dashv R$ where $R: \Sp^{\sS_{\ast}^{\text{fin}}} \to \sS_\ast^{\cat C^{\op}}$ is defined by $R(F)(C) = \hom(C,F)$ for all $C\in \cat C$. The left adjoint $L$ is given by $L(-)=(-)\otimes_{\cat C}H$, where $H\colon \cat C\to \cat \Sp^{\sS_\ast^{\text{fin}}}$ is the inclusion. We thus have the following chain of Quillen maps:

\[
\xymatrix{
\left(\sS_{\ast}^{\cat E^{\op}}\right)^{\cat F^{op}} 
\ar@/^10pt/[r]^>>>{\Lan_{T^{op}}}
&
\sS_\ast^{\cat C^{\op}}
\ar@/^10pt/[l]^<<<{(T^{op})^\ast}
\ar@/^10pt/[r]^>>>L
&
\Sp^{\sS_{\ast}^{\text{fin}}}.
\ar@/^10pt/[l]^<<<R
}
\]

Recall that the localization with respect to the set $\calF$ of maps from Example~\ref{spectra} has ensured that the category $\sS_*^{\cat E^{\op}}$ becomes Quillen equivalent to spectra. The category $\left(\sS_{\ast}^{\cat E^{\op}}\right)^{\cat F^{op}}$ may be localized with respect to the set $\calF_1 = \{\hom(-,F)\wedge f \,|\, F\in \cat F,\, f\in \calF\}$ so that it becomes Quillen equivalent to the category of spectrum valued functors $\Sp^{\cat F^{op}}$, but the Quillen map from Example~\ref{spectra} goes in the wrong direction. Instead we consider the composition of the adjunctions $L_1 = L\Lan_{T^{\op}}\dashv (T^{\op})^*R = R_1$, which remains a Quillen map after the localization of $\left(\sS_{\ast}^{\cat E^{\op}}\right)^{\cat F^{op}}$ with respect to $\calF_1$, since the cofibrations do not change and trivial cofibrations are preserved by the left Quillen functor $L_1$ similarly to Example~\ref{spectra}. We thus have the following Quillen pair:

\begin{equation}\label{Q-adj-first}
\xymatrix{
\Sp^{\cat F^{\op}}
\ar@/^10pt/[r]^{L_1}
&
\Sp^{\sS_{\ast}^{\text{fin}}}
\ar@/^10pt/[l]^{R_1}
},
\end{equation}

Let $f\colon F\to G$ be a map of fibrant $n$-excisive functors. Assume that the natural transformation of diagrams of fibrant spectra $(T^{\op})^*(R(f))$ is a weak equivalence. Then $R(f)$ is a levelwise weak equivalence by \cite[Corollary 1.3.16]{Hovey}. By Lemma~\ref{derivative} the right adjoint $R$ reflects weak equivalences, hence $f$ is weak equivalence and the right adjoint $R_1$ of the Quillen adjunction (\ref{Q-adj-first}) reflects weak equivalences of fibrant objects.

In order to show that (\ref{Q-adj-first}) is a Quillen equivalence, we have to show that the derived unit of the adjunction $X \to R_1\widehat{L_1 X}$ is a weak equivalence for every cofibrant $X\in \Sp^{\sS_\ast^{\text{fin}}}$ by \cite[Corollary 1.3.16]{Hovey}. The proof proceeds analogously to the proof of the corresponding part in Theorem~\ref{criterion}.

Let us show that $R_1$ preserves homotopy pushouts. It obviously preserves homotopy pullbacks of fibrant objects. Since homotopy pushouts are also homotopy pullbacks in the stable model category for $n$-excisive functors, and the target model category is also stable, it follows that homotopy pullbacks in diagrams of spectra are homotopy pushouts. By Remark~\ref{rem}, $R_1(\widehat{-})$ also commutes, up to a weak equivalence, with $\partial \Delta_+^k\wedge - $ and $\Delta_+^k\wedge - $. Finally $R_1(\widehat{-})$ preserves filtered colimits, up to a weak equivalence. In other words, the functors $\Sigma^\infty(\bigwedge_{i=1}^k R^{S^0})$ behave pretty much like homotopy atoms, except that the functors they produce take values in spectra instead of simplicial sets. Hence, if we start from a cellular object $X\in \Sp^{\cat F^{\op}}$ and proceed by a cellular induction, the functor $L_1$ preserves the cellular structure as a left Quillen functor and the functor $R_1(\widehat{-})$ preserve each stage of the cellular construction, up to a weak equivalence, by the above commutation properties. Hence, $X \to R_1\widehat{L_1 X}$ is a weak equivalence for every cofibrant $X\in \Sp^{\sS_\ast^{\text{fin}}}$ and the theorem follows from \cite[Corollary~1.3.16(c)]{Hovey}.
% Retracts don't cause any problem; standard argument.
\end{proof}

\begin{remark}
Note that, by the nature of the Kelly product \cite[6.5]{Kelly}, the right adjoint $R_1$ may be thought of as assigning spectrum-valued cross-effects to each $F\in \Sp^{\sS_\ast^{\text{fin}}}$, cf. \cite[Lemma~3.13]{Arone-Ching-crosseffects}, \cite[Lemma~8.3(ii)]{BCR}. In the notation of \cite[Lemma~3.13]{Arone-Ching-crosseffects} we have, up to Quillen equivalence:
\begin{equation}\label{Q-adj}
\xymatrix{
\Sp^{\cat F^{\op}}
\ar@/^10pt/[r]^{L_1}
&
\Sp^{\sS_{\ast}^{\text{fin}}}
\ar@/^10pt/[l]^{R_1 = \{\Nat(\Sigma^\infty(\wedge_{i=1}^k R^{S^0})_{\text{cof}}, - )\}_{k=1}^n}
},
\end{equation}
where $\Nat(-,-)$ is the spectrum of natural transformations. In the notation of \cite[Lemma~8.3(ii)]{BCR}, 
$$\Nat\left(\Sigma^\infty(\wedge_{i=1}^k R^{S^0})_{\text{cof}}, - \right) = \spt\left((\wedge_{i=1}^k R^{S^0})_{\text{cof}}, - \right),$$
where for $K\in \sS_\ast^{\sS_\ast^{\text{fin}}}$ and $F\in \Sp^{\sS_\ast^{\text{fin}}}$, the spectrum $\spt(K,F)$ is defined by:
\[
\spt(K,F)_i = \hom(K, \Ev_i \circ F) = \hom(K, \hom(\mathbb{S}^{-i}, F)), \, \mathbb{S}^{-i} = \Sigma^{-i}(\Sigma^\infty S^0).
\]
Note that we can not conclude that the right adjoint preserves fibrations of fibrant objects and trivial fibrations, because these mapping spaces do not define a spectral enrichment.
\end{remark}

\section{Comparison to other classifications of polynomial functors} \label{sec:comparison}

This section is devoted to the comparison of our classification of polynomial functors to other results in this field.

Now we are able to compare our classification result to the Dwyer-Rezk classification of polynomial functors using the results of Arone and Ching, \cite{Arone-Ching-crosseffects}.

Let $\Omega_{\leq n}$ denote the category of non-empty finite sets with at most $n$ points and surjections as morphisms. And let  $\Omega_{\leq n}^+$ denote the category with the same objects as $\Omega_{\leq n}$, and morphisms $\hom_{\Omega_{\leq n}^+}(m,k) = \hom_{\Omega_{\leq n}}(m,k)_+\in \sS_\ast$. The reason for adding the base point is to have an enrichment over $\sS_\ast$ on $\Omega_{\leq n}^+$.

Recall that the categories $\Sp$ and $\sS_*$ may be enriched over the category of simplicial sets $\sS$, as well as over the category of pointed simplicial sets, using the half-smash product instead of the smash product.

\begin{corollary}
The $\sS_*$-category of functors $\Sp^{\sS_*^{\text{\emph{fin}}}}$ equipped with the $n$-excisive model structure is Quillen equivalent to the projective model structure on the $\sS$-category $\Sp^{\Omega_{\leq n}}$.
\end{corollary}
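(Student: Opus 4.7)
The plan is to combine the preceding corollary with an identification of $\cat F^{\op}$ as a spectrally enriched version of $\Omega_{\leq n}$, and then apply a change of enrichment from $\Sp$ down to $\sS$. The previous corollary establishes that $\Sp^{\sS_*^{\text{fin}}}$ with the $n$-excisive model structure is Quillen equivalent to $\Sp^{\cat F^{\op}}$ with the $\sS_*$-enriched projective model structure, so it suffices to compare $\Sp^{\cat F^{\op}}$ to $\Sp^{\Omega_{\leq n}}$ on the $\sS$-enriched side.

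First I would compute the mapping spectra of $\cat F$. Applying Lemma~\ref{derivative} to suspensions of $F_l$, together with the standard decomposition $(X_1 \vee \cdots \vee X_k)^{\wedge l} = \bigvee_{f\colon [l]\to [k]} \bigwedge_{i=1}^l X_{f(i)}$, the $k$-th cross-effect of $X\mapsto X^{\wedge l}$ at $(S^0,\ldots,S^0)$ is the pointed set of surjections $\hom_{\Omega_{\leq n}}(l,k)_+$. Consequently the $\Sp$-enriched mapping spectrum inherited from $\Sp^{\sS_*^{\text{fin}}}$ satisfies
\[
\hom^\Sp_\cat F(F_k,F_l)\ \simeq\ \Sigma^\infty \hom_{\Omega_{\leq n}}(l,k)_+,
\]
so under the identification $F_k\leftrightarrow k$ the opposite $\cat F^{\op}$ becomes Dwyer-Kan equivalent, as a $\Sp$-enriched category, to the spectral enrichment $\Sigma^\infty_+ \Omega_{\leq n}$ of the $\sS$-category $\Omega_{\leq n}$.

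Next I would replace the $\sS_*$-enrichment on the presheaf category by the $\Sp$-enrichment above. For any presheaf $G\colon \cat F^{\op}\to \Sp$, the $\sS_*$-mapping spaces $\hom^{\sS_*}_\Sp(G(F_l),G(F_k))=\Omega^\infty \hom^\Sp_\Sp(G(F_l),G(F_k))$ are infinite loop spaces, so by the universal property of $Q=\Omega^\infty\Sigma^\infty$ (equivalently, by the $\Sigma^\infty\dashv\Omega^\infty$ adjunction), a pointed map from the $\sS_*$-hom $\hom^{\sS_*}_\cat F(F_k,F_l)\simeq Q(\hom_{\Omega_{\leq n}}(l,k)_+)$ into such a target is the same data as a spectrum map $\Sigma^\infty \hom_{\Omega_{\leq n}}(l,k)_+\to \hom^\Sp_\Sp(G(F_l),G(F_k))$. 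Hence $\sS_*$-enriched presheaves on $\cat F^{\op}$ and $\Sp$-enriched presheaves on $\cat F^{\op}$ equipped with its spectral hom coincide as categories, and their projective model structures (levelwise fibrations and weak equivalences) agree.

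Finally, I would invoke the change-of-enrichment adjunction $\Sigma^\infty_+\colon \sS \rightleftarrows \Sp\colon \Omega^\infty$, which yields a Quillen equivalence between $\Sp$-enriched presheaves on $\Sigma^\infty_+ \Omega_{\leq n}$ and $\sS$-enriched functors $\Omega_{\leq n}\to \Sp$, i.e.\ $\Sp^{\Omega_{\leq n}}$ with the projective model structure. Composing the chain yields the desired Quillen equivalence. I expect the main obstacle to be verifying that composition in $\cat F^{\op}$ corresponds (under the Dwyer-Kan identification) to composition of surjections in $\Omega_{\leq n}$, not merely matching the hom objects at each pair; this should follow from the naturality of the cross-effect calculation underlying Lemma~\ref{derivative}.
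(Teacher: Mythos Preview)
Your overall strategy---identify $\cat F^{\op}$ with a version of $\Omega_{\leq n}$ and then change enrichment---matches the paper's, and your cross-effect computation is exactly what underlies the Arone--Ching identification the paper cites. But there is a genuine error in your middle step. The $\sS_*$-category $\cat F$ produced by the preceding corollary is the full subcategory of $\Sp^{\sS_*^{\textup{fin}}}$ with its \emph{point-set} $\sS_*$-homs (this is how Theorem~\ref{criterion} builds the indexing category). Since each $F_k=\Sigma^\infty(\bigwedge_{i=1}^k R^{S^0})_{\textup{cof}}$ is a levelwise suspension spectrum and $\Sigma^\infty$ is left adjoint to the underived $\Ev_0$, one has
\[
\hom_{\cat F}(F_k,F_l)\ \cong\ \hom_{\sS_*^{\sS_*^{\textup{fin}}}}\!\left((\bigwedge_{i=1}^k R^{S^0})_{\textup{cof}},\,(\bigwedge_{i=1}^l R^{S^0})_{\textup{cof}}\right)\ \simeq\ \hom_{\Omega_{\leq n}}(l,k)_+,
\]
a \emph{discrete} pointed set, not $Q(\hom_{\Omega_{\leq n}}(l,k)_+)$. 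You have conflated the derived $\Omega^\infty$ of the spectral hom with the actual $\sS_*$-hom carried by $\cat F$. The ``universal property of $Q$'' you invoke is also false as stated: a pointed map $QX\to\Omega^\infty Y$ adjoints to a spectrum map $\Sigma^\infty QX\to Y$, not to $\Sigma^\infty X\to Y$; only \emph{infinite-loop} maps out of $QX$ admit the latter description, and nothing forces the composition in $\cat F$ to consist of such maps. So your identification of $\sS_*$-presheaves with $\Sp$-presheaves on $\cat F^{\op}$ fails both in its premise and in its deduction.

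Once the $\sS_*$-homs of $\cat F^{\op}$ are recognised as discrete, the spectral detour is unnecessary: there is a Dwyer--Kan equivalence $\cat F^{\op}\simeq\Omega_{\leq n}^+$ of $\sS_*$-categories, whence $\Sp^{\cat F^{\op}}\simeq\Sp^{\Omega_{\leq n}^+}$, and the underlying category of the $\sS_*$-enriched $\Sp^{\Omega_{\leq n}^+}$ coincides with the $\sS$-enriched $\Sp^{\Omega_{\leq n}}$. This is precisely the paper's route, organised there via the Kelly decomposition $\cat C^{\op}\simeq\cat E^{\op}\wedge\cat F^{\op}$ and the passage through $\sS_*^{\cat E^{\op}\wedge\Omega_{\leq n}^+}$ rather than by citing the preceding corollary directly, but with the same content.
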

\begin{proof}
Let $\mathbb{S}$ denote the sphere spectrum. Consider the weak equivalence  
\[
\phi\colon \bigvee_{m\fibr k } \mathbb{S} \overset {\sim} {\longrightarrow} \textup{Nat}(F_k, F_m),\, m,k \leq n
\]
which appeared in \cite[3.16]{Arone-Ching-crosseffects}. Looking at the $0$-th level of $\phi$ we obtain the weak equivalence of simplicial sets
\[
\textup{Ev}_0\circ\phi \colon \hom_{\Omega_{\leq n}^+}(m,k)\longrightarrow \hom_{\Sp^{\sS_*^{\text{fin}}}}(F_k, F_m)
\]
In other words, there is a Dwyer-Kan equivalence of the $\sS_*$-categories $\Omega_{\leq n}^+$ and $\cat F^{\op}$ inducing a Quillen map between the categories of spectra valued functors with the projective model structure. That implies a Dwyer-Kan equivalence between the subcategories of fibrant-cofibrant objects of $\Sp^{\Omega_{\leq n}^+}$ and $\Sp^{\cat F^{\op}}$ with the projective model structure. The latter is Quillen equivalent to the $n$-excisive model structure on the category of $\sS_*$-functors $\Sp^{\sS_*^{\text{fin}}}$ by Theorem~\ref{thm:classification-polynomial}.

The last reduction to the original Dwyer-Rezk classification is that the underlying category of the $\sS_*$-category $\Sp^{\Omega_{\leq n}^+}$ is naturally equivalent to the $\sS$-category $\Sp^{\Omega_{\leq n}}$ and the projective model structures on both underlying categories coincide.
\end{proof}

\bibliographystyle{abbrv}
%\bibliography{../Xbib}
\bibliography{Xbib}

\end{document}